\DeclareMathOperator{\Col}{Col}
\DeclareMathOperator{\lcm}{lcm}
\def\ra{\rightarrow}
\def\lra{\leftrightarrow}
\def\a{\alpha}
\def\b{\beta}
\def\d{\delta}
\def\D{\Delta}
\newcommand{\R}{{\mathbb R}}
\def\dsum{\mathop{\sum}\limits}
\newtheorem{thm}{Theorem}[section]
\newtheorem{dfn}[thm]{Definition}
\newtheorem{prp}[thm]{Proposition}
\newtheorem{exa}[thm]{Example}
\newtheorem{rem}[thm]{Remark}
\begin{document}

\title{Observability of Boolean Networks via Set Controllability Approach}

\author{Daizhan Cheng, Changxi Li, Fenghua He
\thanks{This work is supported in part by the National Natural Science Foundation (NNSF) of
China under Grants 61333001, 61773371, 61733018.}
\thanks{D. Cheng is with the Key Laboratory of Systems and Control, Academy of
Mathematics and Systems Sciences, Chinese Academy of Sciences, Beijing 100190, P.R.China (E-mail:
dcheng@iss.ac.cn). Changxi Li and Fenghua He are with Institute of Astronautics, Harbin Institute of Technology, Harbin, P.R.China}
}

\maketitle

\begin{abstract}%
The controllability and observability of Boolean control network(BCN) are two fundamental properties. But the verification of latter is much harder than the former. This paper considers the observability of BCN via controllability. First, the set controllability is proposed, and  the necessary and sufficient condition is obtained. Then a technique is developed to convert the observability into an equivalent set controllability problem. Using the result for set controllability, the necessary and sufficient condition is also obtained for the observability of BCN.
\end{abstract}

\begin{IEEEkeywords}
Boolean control network, set controllability, observability, semi-tensor product of matrices.
\end{IEEEkeywords}

\section{Introduction}

\IEEEPARstart{B}{oolean} network was firstly proposed by Kauffman to describe gene regularity networks \cite{kau69}. Since then it has attracted much attention from biologists, physicists, and system scientists \cite{aku07,alb00,fau06}.

Recently, a new matrix product, called the semi-tensor product (STP) of matrices was introduced. STP has then been successfully applied to modeling and controlling Boolean networks \cite{che11,che12,lujpr}. Inspired by STP, the theory of Boolean control networks (BCNs) as well as the control of general logical systems have been developed rapidly. A set of systematic results have been obtained. For instance, the controllability and observability of Boolean networks have been discussed in \cite{che09,las12}; the disturbance decoupling has been considered in \cite{che11b,yan13}; the optimal control has been investigated in \cite{for14,las11}; the stability and stabilization have been studied in \cite{che11c,liu16}, just to mention a few.

Among them the controllability and observability of BCN are of particular importance. Particularly, the controllability via free control sequence is fundamental, and it has been solved elegantly by \cite{zha10}. Unlike the controllability, the observability has also been discussed for long time and various kinds of observability have been proposed and investigated \cite{che09,zha10,for13,las13}. A comparison for various kinds of observability has been presented in \cite{zha16}. Moreover, \cite{zha16} has also pointed out that one of them, which will be specified later, is the most sensitive observability. Here ``most sensitive one" means all other kinds of observability implies this one. In addition, \cite{zha16} has also provided  necessary and sufficient conditions for various kinds of observability via finite automata approach. Motivated by the idea of \cite{zha16}, \cite{che16} proposed a numerical method to verify the (most sensitive) observability.

Since for BCN the controllability is much easier understandable and verifiable than the observability, this paper proposes a method to verify observability via controllability. In this paper the set controllability of BCN is proposed first. The idea comes from \cite{las12}, where some states are forbidden and it is a special case of our set controllability.  Hence the result about  set controllability can be considered as a generalization of the corresponding result in \cite{las12}. Then a properly designed extended system of the original BCN is built. It is proved that the observability of the original BCN is equivalent to the set controllability of the extended system. Then the observability of BCN is converted into a set controllability problem and  then is solved completely. In fact, the result is equivalent to the necessary and sufficient condition proposed in \cite{che16}. But the new result is concise and easily verifiable.

The rest of this paper is organized as follows: Section 2 describes the set controllability of BCN. The set controllability matrix is constructed. Using it an easily verifiable necessary and sufficient condition is obtained. As an application, the output controllability problem is also solved.  Section 3 constructs an extended system and carefully designs the initial and destination sets. Then the observability of a BCN becomes the set controllability of its extended system. Some examples are presented to describe the design procedure. Section 4 is a concluding remark.

Before ending this section, a list of notations is presented:

\begin{enumerate}

\item[(1)]  ${\cal M}_{m\times n}$: the set of $m\times n$ real matrices.

\item[(2)] $\Col(M)$: the set of columns  of a matrix $M$. $\Col_i(M)$ : the $i$-th column  of $M$.

\item[(3)] ${\cal D}:=\{0,1\}$.

\item[(4)] $\d_n^i$: the $i$-th column of the identity matrix $I_n$.

\item[(5)] $\D_n:=\left\{\d_n^i\vert i=1,\cdots,n\right\}$.

\item[(6)] ${\bf 1}_{\ell}=(\underbrace{1,1,\cdots,1}_{\ell})^T$.


\item[(7)] A matrix $L\in {\cal M}_{m\times n}$ is called a logical matrix
if the columns of $L$ are of the form $\d_m^k$. That is, $\Col(L)\subset \D_m$.
Denote by ${\cal L}_{m\times n}$ the set of $m\times n$ logical matrixes.

\item[(8)] If $L\in {\cal L}_{n\times r}$, by definition it can be expressed as
$L=[\d_n^{i_1},\d_n^{i_2},\cdots,\d_n^{i_r}]$. For the sake of
compactness, it is briefly denoted as $
L=\d_n[i_1,i_2,\cdots,i_r]$.

\item[(9)] Denote by ${\cal B}_{m\times n}$ the set of $m\times n$ Boolean matrices.

\item[(10)] Let $A, B\in {\cal B}_{m\times n}$. Then $A+_{{\cal B}}B$ is the Boolean addition  (with respect to $+_{{\cal B}}=\vee$ and $\times_{{\cal B}}=\wedge$).

\item[(11)] Let $A\in {\cal B}_{m\times n}$, $B\in {\cal B}_{p\times q}$. Then $A\ltimes_{{\cal B}}B$ is the Boolean (semi-tensor) product (with respect to $+_{{\cal B}}=\vee$ and $\times_{{\cal B}}=\wedge$).

\item[(12)] $A^{(k)}:=\underbrace{A\ltimes_{{\cal B}}\cdots\ltimes_{{\cal B}}A}_{k}$.


\item[(13)] $P^0\subset 2^N$ is the set of initial sets, where $N=\{1,2,\cdots,n\}$ is the set of state nodes of a BCN, and $2^N$ is the power set of  $N$.

\item[(14)] $P^d\subset 2^N$ is the set of destination sets.

\end{enumerate}

\section{Set Controllability}

A Markov-type Boolean control network with $n$ nodes is described as \cite{che11}
\begin{align}\label{2.1}
\begin{array}{l}
\begin{cases}
x_1(t+1)=f_1(x_1(t),\cdots,x_n(t);u_1(t),\cdots,u_m(t))\\
x_2(t+1)=f_2(x_1(t),\cdots,x_n(t);u_1(t),\cdots,u_m(t))\\
\vdots\\
x_n(t+1)=f_n(x_1(t),\cdots,x_n(t);u_1(t),\cdots,u_m(t)),
\end{cases}\\
~~~y_j(t)=h_j(x_1(t),\cdots,x_n(t)),\quad j=1,\cdots, p,
\end{array}
\end{align}
where $x_i\in {\cal D}$, $i=1,\cdots,n$ are state variables; $u_i\in {\cal D}$, $i=1,\cdots,m$ are controls; $y_j\in {\cal D}$, $j=1,\cdots,p$ are outputs; $f_i:{\cal D}^{m+n}\ra {\cal D}$, $i=1,\cdots,n$, and $h_j:{\cal D}^{n}\ra {\cal D}$, $j=1,\cdots,p$ are Boolean functions.

\begin{dfn}\label{d2.1} The system (\ref{2.1}) is
\begin{enumerate}
\item controllable from $x_0$ to $x_d$, if there are a $T>0$ and a sequence of control $u(0),\cdots,u(T-1)$, such that driven by these controls  the trajectory can go from $x(0)=x_0$ to $x(T)=x_d$;
\item controllable at $x_0$, if it is controllable from $x_0$ to destination $x_d=x$, $\forall x$;
\item controllable, if it is controllable at any $x_0$.
\end{enumerate}
\end{dfn}

Under the vector form expression:
$$
1\sim \d_2^1, 0\sim \d_2^2,
$$ we have  $x_i,~u_i,~y_j\in \D_2$. Using Theorem \ref{ta.8}, (\ref{2.1}) can be converted into its algebraic form as
\begin{align}\label{2.2}
\begin{array}{l}
\begin{cases}
x(t+1)=Lu(t)x(t)\\
y(t)=Hx(t),
\end{cases}
\end{array}
\end{align}
where $x(t)=\ltimes_{i=1}^nx_i(t)$, $y(t)=\ltimes_{i=1}^py_i(t)$,  $u(t)=\ltimes_{j=1}^mu_j(t)$, and $L\in {\cal L}_{2^n\times 2^{n+m}}$, $H\in {\cal L}_{2^p\times 2^{n}}$.

Define
\begin{align}\label{2.3}
M:={\dsum_{\cal B}}_{j=1}^{2^m}L\d_{2^m}^j,
\end{align}
and set
\begin{align}\label{2.4}
{\cal C}:={\dsum_{\cal B}}_{i=1}^{2^n}M^{(i)},
\end{align}
which is called the controllability matrix. Then we have the following result:

\begin{thm}\label{t2.2}\cite{zha10} Consider the  controllability of system (\ref{2.1}) (by free control sequence). Assume its controllability matrix is ${\cal C}=(c_{i,j})$, then we have the following results:

\begin{enumerate}
\item State $x_i$ is controllable from $x_j$, if and only if, $c_{i,j}=1$.
\item System (\ref{2.1}) is controllable at $x_j$, if and only if, $\Col_j({\cal C})={\bf 1}_{2^n}$.
\item System (\ref{2.1}) is controllable, if and only if, ${\cal C}={\bf 1}_{2^n\times 2^n}$.
\end{enumerate}
\end{thm}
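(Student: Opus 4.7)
\medskip\noindent
\textbf{Proof proposal.} The plan is to interpret the Boolean matrix $M$ as the one-step reachability matrix of the controlled dynamics, show by induction that $M^{(t)}$ captures $t$-step reachability, and then use a finite-state-space argument to truncate the countable union at $t=2^n$.

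First I would unpack $M$. For a fixed control value $u=\d_{2^m}^j$, the algebraic form (\ref{2.2}) gives $x(t+1)=(L\d_{2^m}^j)x(t)$, and $L\d_{2^m}^j\in{\cal L}_{2^n\times 2^n}$ is a (Boolean 0/1) logical matrix whose $k$-th column is precisely the next state reached from $x(t)=\d_{2^n}^k$ under control $\d_{2^m}^j$. Hence the $(i,k)$-entry of $L\d_{2^m}^j$ is $1$ iff state $k$ moves to state $i$ under this particular control. Taking the Boolean sum over $j$ in (\ref{2.3}), the entry $M_{i,k}$ equals $1$ iff there \emph{exists} some control that takes $\d_{2^n}^k$ to $\d_{2^n}^i$ in one step. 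Thus $M$ is the adjacency matrix of the one-step reachability digraph on the state set $\D_{2^n}$.

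Next I would show by induction on $t\ge 1$ that $(M^{(t)})_{i,k}=1$ iff there is a control sequence of length $t$ steering $\d_{2^n}^k$ to $\d_{2^n}^i$. The base case $t=1$ is handled above. For the inductive step, by the definition of Boolean matrix product
\begin{equation*}
(M^{(t+1)})_{i,k}=\bigvee_{r=1}^{2^n}M_{i,r}\wedge(M^{(t)})_{r,k},
\end{equation*}
which is $1$ iff there is an intermediate state $\d_{2^n}^r$ reachable from $\d_{2^n}^k$ in $t$ steps and from which $\d_{2^n}^i$ is reachable in one step; concatenating the two control sequences yields a length-$(t{+}1)$ steering, and conversely any such steering decomposes at time $t$. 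Combining this with (\ref{2.4}), the entry ${\cal C}_{i,j}=1$ iff $\d_{2^n}^i$ is reachable from $\d_{2^n}^j$ in some positive number of steps $T\in\{1,2,\dots,2^n\}$.

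It then remains to show the truncation at $2^n$ loses nothing: if $\d_{2^n}^i$ is reachable from $\d_{2^n}^j$ in \emph{any} positive time, it is reachable in some time $T\le 2^n$. This is the standard pigeonhole / simple-path argument on the reachability digraph, which has only $2^n$ vertices: any walk of length $>2^n$ from $j$ to $i$ contains a repeated vertex and can be shortened by excising the intermediate loop, so one eventually obtains a walk of length at most $2^n$. (The reason $2^n$ rather than $2^n-1$ is used is to also cover the case $i=j$, where a nontrivial cycle of length up to $2^n$ may be required.) This establishes statement (1); statements (2) and (3) are then immediate by quantifying (1) over destinations and over initial states, respectively, so I would record them as one-line corollaries. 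The only mildly technical point — and thus the main obstacle — is the clean translation between Boolean semi-tensor products and concatenation of controlled trajectories in the inductive step; everything else is bookkeeping.
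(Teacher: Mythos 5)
Your argument is correct. Note that the paper itself offers no proof of this theorem --- it is quoted from \cite{zha10} --- so there is nothing internal to compare against; your reachability-digraph reading of $M$, the induction showing $(M^{(t)})_{i,k}$ encodes $t$-step reachability under Boolean powers, and the pigeonhole truncation at $T\le 2^n$ (with the bound $2^n$ rather than $2^n-1$ needed precisely for the $i=j$ case, since Definition \ref{d2.1} requires $T>0$) is exactly the standard argument behind the cited result, and items (2) and (3) do follow by quantifying (1).
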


Denote by $N=\{1,2,\cdots,n\}$ the set of state nodes. Assume $s\in 2^N$, the index vector of $s$, denoted by $V(s)\in \R^n$, is defined as
$$
\left(V(s)\right)_i=\begin{cases}
1,\quad i\in s\\
0,\quad i\not\in s.
\end{cases}
$$

Define the set of initial sets $P^0$ and the set of destination sets $P^d$ respectively as follows:
\begin{align}\label{2.5}
\begin{array}{ccl}
P^0&:=&\left\{s^0_1,s^0_2,\cdots,s^0_{\a}\right\}\subset 2^N,\\
P^d&:=&\left\{s^d_1,s^d_2,\cdots,s^d_{\b}\right\}\subset 2^N.
\end{array}
\end{align}

Using initial sets and destination sets, the set controllability is defined as follows.

\begin{dfn}\label{d2.3} Consider system (\ref{2.1})  with a set of initial sets $P^0$ and a set of destination sets $P^d$.  The system (\ref{2.1}) is
\begin{enumerate}
\item set controllable from $s^0_j\in P^0$ to $s^d_i\in P^d$, if there exist $x_0\in s^0_j$ and $x_d\in s^d_i$, such that $x_d$ is controllable from $x_0$;
\item set controllable at $s^0_j$, if for any $s^d_i\in P^d$, the system is controllable from $s^0_j$ to $s^d_i$;
\item set controllable, if it is set controllable at any $s^0_j\in P^0$.
\end{enumerate}
\end{dfn}

Using the set of initial sets and the set of destination sets defined in (\ref{2.5}), we can define the initial index matrix $J_0$ and the destination index matrix $J_d$ respectively as
\begin{align}\label{2.6}
\begin{array}{ccl}
J_0&:=&\begin{bmatrix}
V(s^0_1)&V(s^0_2)&\cdots&V(s^0_{\a})
\end{bmatrix}\in {\cal B}_{2^n\times \a};\\
J_d&:=&\begin{bmatrix}
V(s^d_1)&V(s^d_2)&\cdots&V(s^d_{\b})
\end{bmatrix}\in {\cal B}_{2^n\times \b}.
\end{array}
\end{align}

Using (\ref{2.6}), we define a matrix, called the set controllability matrix, as
\begin{align}\label{2.7}
{\cal C}_S:=J_d^T\times_{{\cal B}}{\cal C}\times_{{\cal B}}J_0 \in {\cal B}_{\b\times \a}.
\end{align}

Note that hereafter all the matrix products are assumed to be Boolean product ($\times_{{\cal B}}$). Hence the symbol $\times_{{\cal B}}$ is omitted.

According to the definition of set controllability, the following result is easily verifiable.

\begin{thm}\label{t2.4} Consider system (\ref{2.1}) with the set of initial sets $P^0$ and the set of destination sets $P^d$ as defined in (\ref{2.5}). Moreover, the corresponding set controllability matrix  ${\cal C}_S=(c_{ij})$ is defined in (\ref{2.7}). Then
\begin{enumerate}
\item system (\ref{2.1}) is set controllable from $s^0_j$ to $s^d_i$, if and only if, $c_{i,j}=1$;
\item system (\ref{2.1}) is set controllable at $s^0_j$, if and only if $\Col_j\left({\cal C}_S\right)={\bf 1}_{\b}$;
\item system (\ref{2.1}) is set controllable, if and only if, ${\cal C}_S={\bf 1}_{\b\times \a}.$
\end{enumerate}
\end{thm}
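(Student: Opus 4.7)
The plan is to unwind the Boolean matrix product defining $\mathcal{C}_S$ so that its $(i,j)$-entry directly expresses the existence of a reachable pair $(x_0,x_d)\in s^0_j\times s^d_i$. Once item (1) is established, items (2) and (3) are immediate by quantifying columns.

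\textbf{Step 1: interpret entries of $\mathcal{C}_S$.} By Theorem~\ref{t2.2}, the entry $c_{k,\ell}$ of $\mathcal{C}$ equals $1$ if and only if the state $\d_{2^n}^k$ is reachable from $\d_{2^n}^\ell$ by some free control sequence. By the definition of the index vector, $\left(V(s^0_j)\right)_\ell = 1 \Leftrightarrow \ell\in s^0_j$, and similarly for $V(s^d_i)$. Expanding the Boolean matrix product in (\ref{2.7}), I would compute
\begin{equation*}
\left(\mathcal{C}_S\right)_{i,j} \;=\; \bigvee_{k=1}^{2^n}\bigvee_{\ell=1}^{2^n} \left(V(s^d_i)\right)_k \wedge c_{k,\ell} \wedge \left(V(s^0_j)\right)_\ell \;=\; \bigvee_{k\in s^d_i,\ \ell\in s^0_j} c_{k,\ell}.
\end{equation*}
Hence $\left(\mathcal{C}_S\right)_{i,j}=1$ iff there exist $\ell\in s^0_j$ and $k\in s^d_i$ with $c_{k,\ell}=1$, which by Theorem~\ref{t2.2}(1) is exactly the statement that some $x_d=\d_{2^n}^k\in s^d_i$ is reachable from some $x_0=\d_{2^n}^\ell\in s^0_j$. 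By Definition~\ref{d2.3}(1), this is precisely set controllability from $s^0_j$ to $s^d_i$, proving item~(1).

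\textbf{Step 2: lift to columns and to the whole matrix.} For item (2), note that by Definition~\ref{d2.3}(2), the system is set controllable at $s^0_j$ iff for every $i\in\{1,\dots,\b\}$ it is set controllable from $s^0_j$ to $s^d_i$; by item (1) this is equivalent to $c_{i,j}=1$ for all $i$, i.e.\ $\Col_j(\mathcal{C}_S)=\mathbf{1}_\b$. For item (3), Definition~\ref{d2.3}(3) requires set controllability at every $s^0_j\in P^0$, which by item (2) is equivalent to every column of $\mathcal{C}_S$ being $\mathbf{1}_\b$, i.e.\ $\mathcal{C}_S=\mathbf{1}_{\b\times\a}$.

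\textbf{Anticipated obstacle.} There is no deep difficulty here; the theorem is labelled ``easily verifiable'' in the text and the whole argument is essentially a bookkeeping exercise. The only place where care is needed is Step~1, namely the careful distinction between Boolean addition/multiplication and ordinary arithmetic when expanding $J_d^T \mathcal{C} J_0$, and the observation that because all entries are in $\{0,1\}$ the Boolean product $V(s^d_i)^T \mathcal{C}\, V(s^0_j)$ collapses to the disjunction over the index pairs $(k,\ell)$ with $k\in s^d_i$ and $\ell\in s^0_j$. Once that identification is made transparent, the three equivalences reduce to direct quantifier manipulation on the definition of set controllability.
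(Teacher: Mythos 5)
Your proof is correct and is exactly the routine verification the paper alludes to: the paper offers no written proof, stating only that the result ``is easily verifiable'' from the definition of set controllability, and your expansion of the Boolean product $J_d^T\,{\cal C}\,J_0$ into the disjunction $\bigvee_{k\in s^d_i,\,\ell\in s^0_j}c_{k,\ell}$, combined with Theorem~\ref{t2.2}(1), is precisely that verification. The only caveat is notational, not mathematical: the paper's index vectors must be read as living in ${\cal B}_{2^n}$ (indexed by states $\d_{2^n}^\ell$, not by the $n$ nodes), which is the interpretation you correctly adopt.
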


\begin{exa}\label{e2.5} Consider the following system \cite{che11}
\begin{align}\label{2.8}
\begin{array}{l}
\begin{cases}
x_1(t+1)=(x_1(t)\lra x_2(t))\vee u_1(t)\\
x_2(t+1)=\neg x_1(t)\wedge u_2(t),
\end{cases}\\
~~~y(t)=x_1(t)\wedge x_2(t).
\end{array}
\end{align}
It is easy to calculate that
$$
{\cal C}=\begin{bmatrix}
1&1&1&1\\
1&1&1&1\\
0&0&1&0\\
1&1&1&1\\
\end{bmatrix}.
$$
\begin{enumerate}
\item Assume
\begin{align}\label{2.801}
\begin{cases}
P^d=\left\{ s^d_1=\{\d_4^1,\d_4^2\},~~s^d_2=\{\d_4^3,\d_4^4\}\right\};\\
P^0=\left\{s^0_1=\{\d_4^1\},~~s^0_2=\{\d_4^2,\d_4^3,\d_4^4\}\right\}.\\
\end{cases}
\end{align}
Then we have
$$
J_d=\begin{bmatrix}
1&0\\
1&0\\
0&1\\
0&1
\end{bmatrix},\quad
J_0=\begin{bmatrix}
1&0\\
0&1\\
0&1\\
0&1
\end{bmatrix}.
$$
It follows that
$$
{\cal C}_S=J_d^T{\cal C}J_0=\begin{bmatrix}
1&1\\1&1\end{bmatrix}
$$
Hence, the system (\ref{2.8}) is set controllable with respect to the initial set $P^0$ and the destination set $P^d$ defined by (\ref{2.801}).

\item Assume
\begin{align}\label{2.802}
\begin{cases}
P^d=\left\{s^d_1=\{\d_4^3\}\right\};\\
P^0=\left\{s^0_1=\{\d_4^1,\d_4^2,\d_4^3\}, s^0_2=\{\d_4^1,\d_4^4\}\right\}.
\end{cases}
\end{align}
Then
$$
J_d=\begin{bmatrix}
0\\
0\\
1\\
0
\end{bmatrix},\quad
J_0=\begin{bmatrix}
1&1\\
1&0\\
1&0\\
0&1
\end{bmatrix}.
$$
And
$$
{\cal C}_S=J_d^T{\cal C}J_0=\begin{bmatrix}
1&0\end{bmatrix}.
$$
Hence, the system (\ref{2.8}) is not set controllable with respect to the initial set $P^0$ and destination set $P^d$ defined by (\ref{2.802}).
\end{enumerate}
\end{exa}

As an application, we consider the output controllability \cite{oga97}.

\begin{dfn}\label{d3.1} Consider system (\ref{2.1}). It is said to be output controllable, if for any $x(0)=x_0$ and any $y_d$, there exist a $T>0$ and a sequence of control $u(0),u(1),\cdots,u(T-1)$ such that $y(T)=y_d$.
\end{dfn}

\begin{dfn}\label{d3.2} Consider system (\ref{2.1}).
\begin{enumerate}
\item
A partition is called an output-based partition, if
\begin{align}\label{3.1}
s^d_j=\left\{x\;\big|\;Hx=\d_{2^p}^j\right\},\quad j=1,\cdots,2^p.
\end{align}
\item
A partition is called a finest partition, if
\begin{align}\label{3.2}
s^0_i=\{x_i\},\quad i=1,\cdots,2^n.
\end{align}
\end{enumerate}
\end{dfn}

Using (\ref{3.1}) and (\ref{3.2}), we define
\begin{align}\label{3.3}
\begin{cases}
P^d:=\left\{s^d_j\;\big|\; j=1,\cdots,2^p\right\};\\
P^0:=\left\{s^0_i\;\big|\; i=1,\cdots,2^n\right\}.
\end{cases}
\end{align}

Taking the construction of $P^d$ and $P^0$ into consideration, the following result is an immediate consequence of the definition.

\begin{thm}\label{t3.3} System (\ref{2.1}) is output controllability, if and only if, it is set controllability with respect to the set pairs $(P^d,~P^0)$, defined in (\ref{3.3}).
\end{thm}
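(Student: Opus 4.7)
The plan is to unwind both definitions and observe that the constructions of $P^0$ and $P^d$ in (\ref{3.3}) exactly encode the data of an output controllability problem. Specifically, each $s^0_i\in P^0$ is the singleton $\{x_i\}$, so selecting $x_0\in s^0_i$ leaves no freedom and is equivalent to prescribing an initial state; each $s^d_j\in P^d$ is the preimage $\{x : Hx = \delta_{2^p}^j\}$, so reaching some point of $s^d_j$ at time $T$ is equivalent to $y(T) = Hx(T) = \delta_{2^p}^j$. With this dictionary in hand the theorem reduces to translating quantifiers.

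For the forward direction I would assume output controllability, fix arbitrary $s^0_i = \{x_i\} \in P^0$ and $s^d_j \in P^d$, and apply Definition \ref{d3.1} with the choice $x_0 = x_i$ and $y_d = \delta_{2^p}^j$. This yields $T > 0$ and controls driving the trajectory to some $x(T)$ with $Hx(T) = \delta_{2^p}^j$, so $x_d := x(T) \in s^d_j$ is controllable from $x_i \in s^0_i$, which is exactly the required set controllability from $s^0_i$ to $s^d_j$ in the sense of Definition \ref{d2.3}.

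For the converse I would start from set controllability with respect to $(P^d, P^0)$, take any initial state $x_0$ and any desired output $y_d$, write $x_0 = x_i$ and $y_d = \delta_{2^p}^j$, and select the corresponding $s^0_i \in P^0$ and $s^d_j \in P^d$. Set controllability furnishes $x'_0 \in s^0_i$ and $x_d \in s^d_j$ such that $x_d$ is controllable from $x'_0$ in some time $T$; the singleton structure forces $x'_0 = x_i = x_0$, and the preimage structure gives $y(T) = Hx(T) = Hx_d = \delta_{2^p}^j = y_d$, which is exactly Definition \ref{d3.1}.

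There is no real obstacle here beyond bookkeeping; the one subtle point to guard against is that Definition \ref{d2.3}(1) only demands the existence of \emph{some} $x_0 \in s^0_j$, so one must insist on singleton initial sets in (\ref{3.2}) to prevent the set controllability condition from being strictly weaker than fixing a particular initial state. The same singleton observation explains why the "finest partition" in (\ref{3.2}) is the correct choice of $P^0$.
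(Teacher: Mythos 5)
Your proof is correct and matches the paper's intent exactly: the paper offers no written proof beyond declaring the theorem ``an immediate consequence of the definition,'' and your argument is precisely the definition-unwinding that claim presupposes, with the singleton structure of $P^0$ and the preimage structure of $P^d$ doing all the work. Your closing remark about why the finest partition is needed to keep the two notions from diverging is a worthwhile addition, not a deviation.
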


Note that corresponding to $P^0$, defined in (\ref{3.3}), the initial index matrix is an identity matrix, and the destination index matrix is $H^T$. Hence for output controllability, denoting by ${\cal C}_Y$ the output controllability matrix,  we have
\begin{align}\label{3.4}
{\cal C}_Y={\cal C}_S=H{\cal C},
\end{align}
where ${\cal C}_S$ is the set controllability matrix with respect to the set pair $(P^d,~P^0)$ defined in (\ref{3.3}).

The output controllability has been discussed in \cite{liu14}. Comparing our result with the direct approach in \cite{liu14}, the advantage of set controllability approach is obvious.

\begin{exa}\label{e3.4}
Consider system (\ref{2.8}) again. It is easy to figure out that
$$
J_d=\left(\d_2[1,2,2,2]\right)^T.
$$
Then
$$
{\cal C}_Y=J_d^T{\cal C}=\begin{bmatrix}
1&1&1&1\\
1&1&1&1\\
\end{bmatrix}>0.
$$
Hence, system (\ref{2.8}) is output controllable.
\end{exa}


\section{Observability via Set Controllability Approach}

As discussed in \cite{zha16}  the following one is the most sensitive observability among those in recent literature.

\begin{dfn} \label{d5.1} \cite{zha16} System (\ref{2.1}) is observable, if for any two initial states $x_0\neq z_0$, there exist an integer $T\geq 0$ and a control sequence $u=\{u(0),u(1),\cdots,u(T-1)\}$, such that the corresponding output sequence $y(i)=y_i(x_0,u)$, $i=0,1,\cdots,T$ is not equal to $\tilde{y}_i(z_0,u)$.
\end{dfn}

Next, we consider two kinds of state pairs.

\begin{dfn}\label{d5.2} A pair $(x,z)\in \D_{2^n}\times \D_{2^n}$ is $y$-indistinguishable if $Hx=Hy$. Otherwise, $(x,z)$ is called $y$-distinguishable.
\end{dfn}

Following \cite{che16}, we split the product state space $\D_{2^n}\times \D_{2^n}$ into a partition of three components as
\begin{align}\label{5.1}
D=\{zx\;\big|\;z = x\},
\end{align}
\begin{align}\label{5.2}
\Theta=\{zx\;\big|\;z\neq x~\mbox{and}~Hz=Hx\},
\end{align}
\begin{align}\label{5.3}
\Xi=\{zx\;\big|\;Hz\neq Hx\}.
\end{align}

Using algebraic form (\ref{2.2}), we construct a dual system as
\begin{align}\label{5.4}
\begin{cases}
z(t+1)=Lu(t)z(t)\\
x(t+1)=Lu(t)x(t).
\end{cases}
\end{align}

 Then the observability problem  of system (\ref{2.1}) can be converted into a set controllability problem of the extended system (\ref{5.4}). Construct the initial sets and the destination sets as follows:

\begin{align}\label{5.5}
P^0:=\bigcup_{zx\in \Theta}\{zx\}
\end{align}
and
\begin{align}\label{5.6}
P^d:=\{\Xi\}.
\end{align}

Note that (\ref{5.5}) means that each $zx\in \Theta$ is an element of $P^0$, while (\ref{5.6}) means $P^d$ has only one element, which is $\Xi$.

Then we have the following result:

\begin{thm}\label{t5.2} System (\ref{2.1}) is observable, if and only if, system (\ref{5.4}) is set controllable from $P^0$ to $P^d$, which are defined in (\ref{5.5}) and (\ref{5.6}) respectively.
\end{thm}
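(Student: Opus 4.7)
\medskip
\noindent\textbf{Proof plan.} The strategy is to translate observability (Definition \ref{d5.1}) into a reachability statement on (\ref{5.4}) and then match it term-for-term with set controllability (Definition \ref{d2.3}) for the particular $P^0$, $P^d$ from (\ref{5.5})--(\ref{5.6}).

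The key reformulation uses the partition $\D_{2^{2n}}=D\cup\Theta\cup\Xi$. First I would note that (\ref{5.4}) is a BCN on $\D_{2^{2n}}$ driven by a shared control $u$, that every element of $P^0$ is a \emph{singleton} $\{zx\}$ with $zx\in\Theta$, and that $P^d=\{\Xi\}$ is a single set. Hence set controllability of (\ref{5.4}) with respect to $(P^0,P^d)$ is equivalent to the concrete condition $(\star)$: for every $zx\in\Theta$ there exist $T\ge 1$, controls $u(0),\ldots,u(T-1)$, and a pair $z'x'\in\Xi$, such that the extended trajectory starting at $zx$ reaches $z'x'$ at time $T$. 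On the other hand, pairs in $D$ are excluded from Definition \ref{d5.1} by $z_0\ne x_0$, and for $z_0x_0\in\Xi$ observability is automatic with $T=0$ since $Hz_0\ne Hx_0$, so the nontrivial content of Definition \ref{d5.1} is exactly: for every $zx\in\Theta$ some control sequence produces outputs $y$, $\tilde y$ that differ at some $i\ge 1$. But $y(i)\ne\tilde y(i)$ is precisely $Hz(i)\ne Hx(i)$, i.e.\ $z(i)x(i)\in\Xi$, which is identical to $(\star)$.

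Both directions then follow by routine bookkeeping. For necessity, given $zx\in\Theta$, I would invoke Definition \ref{d5.1}, let $i\ge 1$ be the first index at which the two outputs disagree, truncate the control sequence to length $i$, and observe $z(i)x(i)\in\Xi$. For sufficiency, given $z_0\ne x_0$, either $z_0x_0\in\Xi$ (use $T=0$) or $z_0x_0\in\Theta$ and $(\star)$ supplies controls steering $(z_0,x_0)$ into $\Xi$, whence the outputs disagree at the terminal time. The only points requiring attention are the $T=0$ corner case of Definition \ref{d5.1} and the shared-control coupling in (\ref{5.4}); neither is a genuine obstacle, and I anticipate no substantive step beyond the partition argument above.
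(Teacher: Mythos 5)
Your proposal is correct and follows essentially the same route as the paper: both directions reduce to the partition $D\cup\Theta\cup\Xi$, with necessity handled by truncating the control sequence at the first time the outputs disagree (necessarily $\ge 1$ for pairs in $\Theta$) and sufficiency by driving a $\Theta$-pair into $\Xi$ under the shared control. Your explicit unpacking of set controllability into the condition $(\star)$ (using that $P^0$ consists of singletons and $P^d=\{\Xi\}$) is just a slightly more detailed restatement of what the paper's proof does implicitly.
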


\begin{proof}
(Necessary): Assume the system is observable. Then for any two initial points $z_0\neq x_0$, there exists a control sequence $\{u(t)\;|\;t=0,1,\cdots\}$ such that the corresponding output sequences $\{y(t)\;|\;t=0,1,\cdots\}$ and $\{\tilde{y}(t)\;|\;t=0,1,\cdots\}$ are not the same. Let $T\geq 0$ be the smallest $t$ such that $y(t)\neq \tilde{y}(t)$. If $T=0$, $(z_0,~x_0)\in \Xi^c$ is a distinguishable pair. Assume $T>0$. Applying the sequence of controls to system (\ref{5.4}), $(z_0,~x_0)$ can be driven to $(z(T),~x(T))$. Since $Hz(T)=y(T)\neq \tilde{y}(T)=Hx(T)$, we have $(z(T),~x(T))\in \Xi$. That is, system (\ref{5.4}) is set controllable from $P^0$ to $P^d$.

(Sufficiency):  Assume a pair $z_0\neq x_0$ is given. If $(z_0,~x_0)\in \Xi$, we are done. Otherwise, since the system (\ref{5.4}) is set controllable from $P^0$ to $P^d=\{\Xi\}$, there exists a control sequence $\{u(t)\;|\;t=0,1,\cdots\}$ which drives  $(z_0,~ x_0)$ to $(z_T,~x_T)\in \Xi$.

It is worth noting that system (\ref{5.4}) is essentially a combination of two independent systems corresponding to $z$ and $x$ respectively. Only the same control sequence is applied to them. Hence we have $z_T$ is on the trajectory of (\ref{2.2}) with the above mentioned control sequence $\{u(t)\;|\;t=0,1,\cdots\}$, that is, $z_T=x(z_0,u(0),u(1),\cdots,u(T-1))$, and $x_T=x(x_0,u(0),u(1),\cdots,u(T-1))$. Since $(z_T,~x_T)\in \Xi$, which means that using this control sequence to system (\ref{2.2}), it distinguishes $z_0$ and $x_0$.

\end{proof}
\begin{exa}\label{e5.4}
Consider the reduced model for the lac operon in the bacterium Escherichia coli \cite{avc11}
\begin{align}\label{5.7}
\begin{array}{l}
\begin{cases}
x_1(t+1)=\neg u_1(t)\wedge\left(x_2(t)\vee x_3(t)\right)\\
x_2(t+1)=\neg u_1(t)\wedge u_2(t)\wedge x_1(t)\\
x_3(t+1)=\neg u_1(t)\wedge \left(u_2(t)\vee (u_3(t)\wedge x_1(t))\right),\\
\end{cases}
\end{array}
\end{align}
where  $x_1, x_2$ and $x_3$ represent  the lac {\rm mRNA}, the lactose in high and medium concentrations, respectively;
$u_1, u_2$ and $u_3$ are  the extracellular glucose, high and medium extracellular lactose, respectively.

\begin{enumerate}
  \item
Assume that the outputs are
\begin{align}\label{5.8}
\begin{array}{l}
\begin{cases}
y_1(t)=x_1(t)\vee \neg x_2(t)\vee x_3(t)\\
y_2(t)=\neg x_1(t)\vee x_2(t)\wedge\neg x_3(t)\\
y_3(t)=\neg x_1(t)\wedge \neg x_2(t)\vee x_3(t).\\
\end{cases}
\end{array}
\end{align}

Its algebraic form is
\begin{align}\label{5.8}
\begin{array}{l}
x(t+1)=Lu(t)x(t)\\
y(t)=Hx(t),
\end{array}
\end{align}
where
$$
\begin{array}{ccl}
L&=&\d_8[8,8,8,8,8,8,8,8,8,8,8,8,8,8,8,8,\\
~&~&~~~8,8,8,8,8,8,8,8,8,8,8,8,8,8,8,8,\\
~&~&~~~1,1,1,5,3,3,3,7,1,1,1,5,3,3,3,7,\\
~&~&~~~3,3,3,7,4,4,4,8,4,4,4,8,4,4,4,8],\\
H&=&\d_8[8,6,3,6,5,6,7,6].
\end{array}
$$
Construct the dual system as
\begin{align}\label{5.9}
\begin{cases}
z(t+1)=Lu(t)z(t)\\
x(t+1)=Lu(t)x(t).
\end{cases}
\end{align}
It is easy to figure out that
$$
\begin{array}{cl}
\Theta=&\left\{\{\d_8^2,\d_8^4\}, \{\d_8^2,\d_8^6\},\{\d_8^2,\d_8^8\},\{\d_8^4,\d_8^6\},\right.\\
~&~\left.\{\d_8^4,\d_8^8\},\{\d_8^6,\d_8^8\}\right\}\\
~\sim&\left\{\d_{64}^{12}, \d_{64}^{14}, \d_{64}^{16},\d_{64}^{30},\d_{64}^{32},\d_{64}^{48}\right\}\\
~:=&\{\theta_1,\theta_2, \theta_3,\theta_4,\theta_5, \theta_6\};
\end{array}
$$
and
$$
\begin{array}{cl}
\Xi=&\left\{\{\d_8^1,\d_8^2\},\{\d_8^1,\d_8^3\},\{\d_8^1,\d_8^4\}, \{\d_8^1,\d_8^5\},\{\d_8^1,\d_8^6\},\right.\\
~&~\left.\{\d_8^1,\d_8^7\},\{\d_8^1,\d_8^8\},\{\d_8^2,\d_8^3\},\{\d_8^2,\d_8^5\}, \{\d_8^2,\d_8^7\},\right.\\
~&~\left.\{\d_8^4,\d_8^3\},\{\d_8^4,\d_8^5\},\{\d_8^4,\d_8^7\},\{\d_8^6,\d_8^3\},\{\d_8^6,\d_8^5\}, \right.\\
~&~\left.\{\d_8^6,\d_8^7\},\{\d_8^8,\d_8^3\},\{\d_8^8,\d_8^5\},\{\d_8^8,\d_8^7\},\{\d_8^3,\d_8^5\},\right.\\
~&~\left.\{\d_8^3,\d_8^7\},\{\d_8^5,\d_8^7\}\right\}\\
~~\sim&\left\{\d_{64}^{2}, \d_{64}^{3}, \d_{64}^{4},\d_{64}^{5},\d_{64}^{6},\d_{64}^{7},\d_{64}^{8}, \d_{64}^{11}, \d_{64}^{13},\d_{64}^{15},\d_{64}^{21},\right.\\
~&~\left.\d_{64}^{23},\d_{64}^{27},\d_{64}^{29},\d_{64}^{31},\d_{64}^{39},\d_{64}^{43},\d_{64}^{45},\d_{64}^{47},\d_{64}^{59},\d_{64}^{61},\d_{64}^{63}\right\}.
\end{array}
$$

Set $w(t)=z(t)x(t)$, then (\ref{5.9}) can be expressed as
$$
\begin{array}{l}
z(t+1)=L\left(I_{64}\otimes {\bf 1}_{8}^T\right)u(t)w(t)\\
x(t+1)=L\left({\bf 1}_{8}^T\otimes I_{8}\right)u(t)w(t).
\end{array}
$$
Finally, we have
\begin{align}\label{5.10}
w(t+1)=Mu(t)w(t),
\end{align}
where
$$
\begin{array}{ccl}
M&=&\d_{64}[ 64, 64, 64,  \ldots,  60, 60, 60, 64]\in{\cal L}_{{64}\times {512}}\\
~&:=&[M_1,M_2,M_3,M_4,M_5,M_6,M_7,M_8].
\end{array}
$$
Then the controllability matrix of (\ref{5.9}) can be calculated by
$$
{\cal C}:={\dsum_{{\cal B}}}_{j=1}^{64}\left({\dsum_{{\cal B}}}_{i=1}^8M_i\right)^{(j)}\in {\cal B}_{64\times 64}.
$$

Finally, we consider the set controllability of (\ref{5.9}).  Using the initial set $P^0=\{\theta\in\Theta\}=\{\theta_1,\theta_2,\theta_3,\theta_4,\theta_5,\theta_6\}$ and the destination set $P^d=\Xi$, we have
$$
J_d=\sum_{\d_{64}^i\in \Xi}\d_{64}^i;
$$
and
$$
J_0= \d_{64}[ 12,  14,  16,  30,  32,  48].
$$
It follows that
$$
{\cal C}_S=J_d^T{\cal C}J_0=\begin{bmatrix}1&1&1&1&1&1\end{bmatrix}>0.
$$
According to Theorem \ref{t5.2}, system (\ref{5.7}) with outputs (\ref{5.8}) is observable.

\item Assume the measured outputs of system (\ref{5.7}) are
\begin{align}\label{5.11}
\begin{array}{l}
\begin{cases}
y_1(t)=x_1(t)\\
y_2(t)=x_2(t).\\
\end{cases}
\end{array}
\end{align}
Its algebraic form is
\begin{align}\label{5.12}
\begin{array}{l}
y(t)=Hx(t),
\end{array}
\end{align}
where
$$
\begin{array}{ccl}
H&=&\d_4[1,1,2,2,3,3,4,4].
\end{array}
$$
It is easy to figure out that
$$
\begin{array}{cl}
\Theta=&\left\{\{\d_8^1,\d_8^2\}, \{\d_8^3,\d_8^4\},\{\d_8^5,\d_8^6\},\{\d_8^7,\d_8^8\}\right\}\\
~\sim&\left\{\d_{64}^{2}, \d_{64}^{20}, \d_{64}^{38},\d_{64}^{56}\right\}\\
~:=&\{\theta_1,\theta_2, \theta_3,\theta_4\};
\end{array}
$$
and
$$
\begin{array}{cl}
\Xi=&\left\{\{\d_8^1,\d_8^3\},\{\d_8^1,\d_8^4\},\{\d_8^1,\d_8^5\}, \{\d_8^1,\d_8^6\},\{\d_8^1,\d_8^7\},\right.\\
~&~\left.\{\d_8^1,\d_8^8\},\{\d_8^2,\d_8^3\},\{\d_8^2,\d_8^4\},\{\d_8^2,\d_8^5\}, \{\d_8^2,\d_8^6\},\right.\\
~&~\left.\{\d_8^2,\d_8^7\},\{\d_8^2,\d_8^8\},\{\d_8^3,\d_8^5\}, \{\d_8^3,\d_8^6\},\{\d_8^3,\d_8^7\},\right.\\
~&~\left.\{\d_8^3,\d_8^8\},\{\d_8^4,\d_8^5\},\{\d_8^4,\d_8^6\}, \{\d_8^4,\d_8^7\},\{\d_8^4,\d_8^8\},\right.\\
~&~\left.\{\d_8^5,\d_8^7\},\{\d_8^5,\d_8^8\},\{\d_8^6,\d_8^7\},\{\d_8^6,\d_8^8\}\right\}\\
~~\sim&\left\{\d_{64}^{3}, \d_{64}^{4},\d_{64}^{5},\d_{64}^{6},\d_{64}^{7},\d_{64}^{8}, \d_{64}^{11}, \d_{64}^{12}, \d_{64}^{13},\d_{64}^{14},\d_{64}^{15},\right.\\
~&~\left.\d_{64}^{16},\d_{64}^{21},\d_{64}^{22},\d_{64}^{23},\d_{64}^{24},\d_{64}^{29},\d_{64}^{30},\d_{64}^{31},\d_{64}^{32},\d_{64}^{39},\d_{64}^{40},\right.\\
~&~\left.\d_{64}^{47},\d_{64}^{48}\right\}.
\end{array}
$$
Using the initial set $P^0=\{\theta\in\Theta\}=\{\theta_1,\theta_2,\theta_3,\theta_4\}$ and the destination set $P^d=\Xi$, we have
$$
J_d=\sum_{\d_{64}^i\in \Xi}\d_{64}^i;
$$
and
$$
J_0= \d_{64}[2,  20,  38,  56 ].
$$
It follows that
$$
{\cal C}_S=J_d^T{\cal C}J_0=\begin{bmatrix}0&1&0&1\end{bmatrix}.
$$
According to Theorem \ref{t5.2}, system (\ref{5.7}) with outputs (\ref{5.11}) is not observable.
\end{enumerate}
\end{exa}

\section{Conclusion}
In this paper the set controllability of BCN is proposed, and necessary and sufficient condition is obtained. As an application, the output controllability is converted into a set controllability problem and is solved easily. Then an extended system is constructed for a given BCN. It has been proved that the  observability of the given BCN is equivalent to the set controllability of the extended system. Then the observability of a BCN is verified via the set controllability of the extended system by providing a concise and easily verifiable necessary and sufficient condition. A numerical example has been presented to demonstrate the theoretical result. The method reveals a relationship between controllability and observability of BCN.

\appendix[Semi-tensor Product of Matrices]

Semi-tensor product of matrices was proposed by us. It is convenient in dealing with logical functions. We refer to \cite{che11, che12} and the references therein for details. In the follows we give a very brief survey.

\begin{dfn}%
Let $A\in {\cal M}_{m\times n}$ and $B\in {\cal M}_{p\times q}$.
Denote by $
t:=\lcm(n,p)$ the least common multiple of $n$ and $p$.
Then we define the semi-tensor product (STP) of $A$ and $B$ as
\begin{align}\label{a.1}
A\ltimes B:=\left(A\otimes I_{t/n}\right)\left(B\otimes I_{t/p}\right)\in {\cal M}_{(mt/n) \times (qt/p)}.
\end{align}
\end{dfn}

\begin{rem}%
\begin{itemize}
\item When $n=p$, $A\ltimes B=AB$. So the STP is a generalization of conventional matrix product.

\item When $n=rp$, denote it by $A\succ_r B$;

when $rn=p$, denote it by $A\prec_r B$.

These two cases are called the multi-dimensional case, which is particularly important in applications.

\item STP keeps almost all the major properties of the conventional matrix product unchanged.

\end{itemize}
\end{rem}

%
%
%

We cite some basic properties which are used in this note.

\begin{prp}\label{pa.4}
\begin{enumerate}
\item (Associative Low)
\begin{align}\label{a.2}
A\ltimes (B\ltimes C)=(A\ltimes B)\ltimes C.
\end{align}
\item (Distributive Low)
\begin{align}\label{a.3}
\begin{array}{l}
(A + B)\ltimes C=A\ltimes C + B \ltimes C.\\
A\ltimes (B + C)=A\ltimes B + A\ltimes C.
\end{array}
\end{align}
\item
\begin{align}\label{a.4}
(A\ltimes B)^T=B^T\ltimes A^T.
\end{align}
\item Assume $A$ and $B$ are invertible, then
\begin{align}\label{a.5}
(A\ltimes B)^{-1}=B^{-1}\ltimes A^{-1}.
\end{align}
\end{enumerate}
\end{prp}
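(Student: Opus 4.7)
The plan is to reduce each of the four identities to the analogous identity for the ordinary Kronecker product, by unfolding the definition
\[
A\ltimes B=(A\otimes I_{t/n})(B\otimes I_{t/p}),\qquad t:=\lcm(n,p),
\]
and then invoking standard properties of $\otimes$: the mixed-product rule $(X\otimes Y)(Z\otimes W)=(XZ)\otimes(YW)$ (when the inner sizes match), the transpose rule $(X\otimes Y)^T=X^T\otimes Y^T$, the inverse rule $(X\otimes Y)^{-1}=X^{-1}\otimes Y^{-1}$ for invertible square $X,Y$, and the identity $I_k\otimes I_l=I_{kl}$.

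For the three ``easy'' parts I would argue as follows. Distributivity (\ref{a.3}): the hypothesis that $A+B$ is defined forces $A$ and $B$ to have the same shape, so the $\lcm$ appearing in the definition of $(\cdot)\ltimes C$ is the same for $A$, $B$, and $A+B$; linearity of $\otimes$ in its first argument together with distributivity of ordinary matrix multiplication then give both identities, and the second (left) distributivity is symmetric. Transpose rule (\ref{a.4}): transposing the defining product and reversing the order of factors gives $(B^T\otimes I_{t/p})(A^T\otimes I_{t/n})$; since $B^T\in\mathcal{M}_{q\times p}$ and $A^T\in\mathcal{M}_{n\times m}$, the inner $\lcm$ relevant to $B^T\ltimes A^T$ is $\lcm(p,n)=t$, so the expression matches the definition verbatim. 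Inverse rule (\ref{a.5}): invertibility of $A,B$ forces them to be square, hence both Kronecker factors in the definition of $A\ltimes B$ are invertible with Kronecker inverses $A^{-1}\otimes I_{t/n}$ and $B^{-1}\otimes I_{t/p}$; applying $(XY)^{-1}=Y^{-1}X^{-1}$ yields $B^{-1}\ltimes A^{-1}$.

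The main obstacle is associativity (\ref{a.2}). With $A\in\mathcal{M}_{m\times n}$, $B\in\mathcal{M}_{p\times q}$, $C\in\mathcal{M}_{r\times s}$, the two parenthesizations use $\lcm$'s taken in different orders, so even the shapes of the two sides are a priori unclear. My plan is to compute each side by unfolding the definition twice and, using the identities $(XY)\otimes I_k=(X\otimes I_k)(Y\otimes I_k)$ and $(X\otimes I_k)\otimes I_l=X\otimes I_{kl}$, rewrite both parenthesizations as three-factor expressions of the form
\[
(A\otimes I_{\alpha})(B\otimes I_{\beta})(C\otimes I_{\gamma}),
\]
with $\alpha,\beta,\gamma$ forced by the requirement that the inner dimensions of consecutive factors agree. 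The key technical step is a number-theoretic identity showing that the two orders of nested $\lcm$'s produce the same triple $(\alpha,\beta,\gamma)$; concretely, I would verify that both choices coincide with the data dictated by the smallest common padding $T$ satisfying $n\mid T$, $p\mid T$, and $r\mid Tq/p$, namely $\alpha=T/n$, $\beta=T/p$, $\gamma=Tq/(pr)$. Once this equality of padding data is in place, associativity of the ordinary matrix product finishes the argument, since both sides are then literally the same three-factor product.
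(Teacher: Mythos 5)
The paper does not prove this proposition at all: it is stated in the appendix as a cited fact from the STP monographs \cite{che11,che12}, so there is no in-paper argument to compare yours against. Your route --- unfolding $A\ltimes B=(A\otimes I_{t/n})(B\otimes I_{t/p})$ and reducing everything to the mixed-product, transpose, and inverse rules for $\otimes$ --- is the standard one from those references, and your treatments of (\ref{a.3}), (\ref{a.4}) and (\ref{a.5}) are complete and correct; in particular you rightly observe that the relevant $\lcm$ is unchanged under transposition ($\lcm(p,n)=\lcm(n,p)$) and under summands of equal shape. For associativity you have correctly isolated the real content: after rewriting both parenthesizations as $(A\otimes I_{\alpha})(B\otimes I_{\beta})(C\otimes I_{\gamma})$ via $(XY)\otimes I_k=(X\otimes I_k)(Y\otimes I_k)$ and $(X\otimes I_k)\otimes I_l=X\otimes I_{kl}$, everything hinges on the nested $\lcm$'s agreeing. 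That step is still only announced, not executed, but the claimed identity is true and your characterization of $T$ is the right one: with $t_1=\lcm(n,p)$, $t_3=\lcm(q,r)$, the two sides require $p\,\lcm\!\left(\tfrac{q t_1}{p},r\right)=q\,\lcm\!\left(n,\tfrac{p t_3}{q}\right)$, which after clearing $\lcm$'s into $\gcd$'s reduces to
\begin{align*}
\gcd(q,r)\gcd\!\left(n,\tfrac{pr}{\gcd(q,r)}\right)=\gcd(nq,nr,pr)=\gcd(n,p)\gcd\!\left(\tfrac{qn}{\gcd(n,p)},r\right),
\end{align*}
both sides being the same symmetric three-term $\gcd$. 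Supplying that computation (or an equivalent prime-by-prime valuation argument) would close the only remaining gap and make the proof complete.
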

\begin{prp}\label{pa.5}
Let $X\in \R^t$ be a column vector. Then for a matrix $M$
\begin{align}\label{a.6}
X\ltimes M=\left(I_t\otimes M\right) \ltimes X.
\end{align}
\end{prp}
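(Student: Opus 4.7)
The plan is to reduce both sides of the identity $X\ltimes M=(I_t\otimes M)\ltimes X$ to the common Kronecker product $X\otimes M$, using only the definition of STP in (\ref{a.1}) together with the standard mixed-product property of the Kronecker product, namely $(A\otimes B)(C\otimes D)=(AC)\otimes(BD)$ whenever the indicated conventional products are defined.

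First I would compute the left-hand side. Writing $M\in{\cal M}_{p\times q}$ and noting that $X\in{\cal M}_{t\times 1}$ has $n=1$ column, the relevant least common multiple is $\lcm(1,p)=p$, so by (\ref{a.1})
\begin{equation*}
X\ltimes M=(X\otimes I_{p})(M\otimes I_{1})=(X\otimes I_p)\,M.
\end{equation*}
To collapse this to a Kronecker product, I would rewrite $M$ as $1\otimes M$ (with the scalar $1\in{\cal M}_{1\times 1}$) and apply the mixed-product rule:
\begin{equation*}
(X\otimes I_p)(1\otimes M)=(X\cdot 1)\otimes(I_p\cdot M)=X\otimes M.
\end{equation*}

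Next I would evaluate the right-hand side. Since $I_t\otimes M\in{\cal M}_{tp\times tq}$ and $X\in{\cal M}_{t\times 1}$, we have $\lcm(tq,t)=tq$, and (\ref{a.1}) gives
\begin{equation*}
(I_t\otimes M)\ltimes X=(I_t\otimes M)(X\otimes I_q).
\end{equation*}
Applying the mixed-product rule once more,
\begin{equation*}
(I_t\otimes M)(X\otimes I_q)=(I_t X)\otimes(M I_q)=X\otimes M,
\end{equation*}
which matches the left-hand side and finishes the proof.

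There is no real obstacle here; the argument is essentially bookkeeping. The only point requiring care is the dimension check that makes the two applications of the mixed-product identity legal, in particular writing $M=1\otimes M$ on the left so that the inner blocks line up as $1\times 1$ with $t\times 1$, and verifying on the right that $I_tX$ and $MI_q$ are well-defined conventional products. Once those dimensional conventions are made explicit, both sides collapse to $X\otimes M$ and the identity follows.
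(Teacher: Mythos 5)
Your proof is correct: both reductions to the definition (\ref{a.1}) are carried out with the right least common multiples, and the two applications of the mixed-product rule $(A\otimes B)(C\otimes D)=(AC)\otimes(BD)$ are dimensionally legal, so both sides do collapse to $X\otimes M$. The paper itself offers no proof of Proposition \ref{pa.5} --- it is cited as a standard property of the semi-tensor product from the references --- so there is nothing to compare against; your argument is a complete and valid justification of the stated identity.
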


Finally, we consider how to express a Boolean function into an algebraic form.
\begin{thm}\label{ta.8}
Let $f:{\cal D}^n\ra {\cal D}$ be a Boolean function expressed as
\begin{align}\label{a.9}
y=f(x_1,\cdots,x_n).
\end{align}

Identifying
\begin{align}\label{a.10}
1\sim \d_2^1, \quad 0\sim \d_2^2.
\end{align}
Then there exists a unique logical matrix $M_f\in {\cal L}_{2\times 2^n}$, called the structure matrix of $f$, such that under vector form,
by using (\ref{a.10}), (\ref{a.9}) can be expressed as
\begin{align}\label{a.11}
y=M_f\ltimes_{i=1}^nx_i,
\end{align}
which is called the algebraic form of (\ref{a.9}).
\end{thm}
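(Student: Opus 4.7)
The plan is to prove existence and uniqueness by exploiting the fact that, under the vector identification $1\sim\d_2^1,\ 0\sim\d_2^2$, the combined state $\ltimes_{i=1}^n x_i$ ranges bijectively over the basis $\D_{2^n}$ as $(x_1,\ldots,x_n)$ ranges over $\D^n$. This reduces the claim to specifying one column of a logical matrix for each input assignment.

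First I would record the following key identity: for vectors $x_i\in\D_2$, since each is a column of length $2$, the STP reduces to the Kronecker product of column vectors, so $\ltimes_{i=1}^n x_i=x_1\otimes x_2\otimes\cdots\otimes x_n$. Then I would verify by induction on $n$ that the map $\Phi:\D^n\to\D_{2^n}$ defined by $\Phi(x_1,\ldots,x_n):=\ltimes_{i=1}^n x_i$ is a bijection. The base case $n=1$ is immediate. For the inductive step, if $\ltimes_{i=1}^{n-1}x_i=\d_{2^{n-1}}^j$, then $\ltimes_{i=1}^{n-1}x_i\otimes x_n$ equals $\d_{2^n}^{2j-1}$ when $x_n=\d_2^1$ and $\d_{2^n}^{2j}$ when $x_n=\d_2^2$, and these are distinct; varying $j$ and the last bit exhausts $\D_{2^n}$. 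This pins down an explicit enumeration $\Phi^{-1}(\d_{2^n}^k)$ for $k=1,\ldots,2^n$.

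For existence, I would define the matrix $M_f\in{\cal L}_{2\times 2^n}$ column-wise by
\begin{equation*}
\Col_k(M_f):=\text{vector form of } f\bigl(\Phi^{-1}(\d_{2^n}^k)\bigr),\qquad k=1,\ldots,2^n,
\end{equation*}
which is in $\D_2$ since $f$ is Boolean, so $M_f$ is indeed a logical matrix. Then for any input $(x_1,\ldots,x_n)$ with $\Phi(x_1,\ldots,x_n)=\d_{2^n}^k$, the product $M_f\ltimes_{i=1}^n x_i=M_f\d_{2^n}^k=\Col_k(M_f)$ equals the vector form of $f(x_1,\ldots,x_n)$, giving the desired algebraic form (\ref{a.11}).

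For uniqueness, suppose $M_f$ and $M_f'$ both satisfy (\ref{a.11}). Applying each to the $2^n$ assignments and using the bijectivity of $\Phi$, every column of $M_f$ agrees with the corresponding column of $M_f'$, hence $M_f=M_f'$. The only nontrivial step is the bijection $\Phi$; everything else is bookkeeping, so I expect the inductive identification of $\ltimes_{i=1}^n x_i$ with $\d_{2^n}^k$ (and the ordering convention that makes this assignment natural) to be the main—and essentially only—substantive ingredient.
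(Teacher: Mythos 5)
Your proof is correct, and it is the standard argument: the paper itself states Theorem \ref{ta.8} without proof, deferring to the STP monographs, where exactly this construction appears. Your two ingredients --- that $\ltimes_{i=1}^n x_i = x_1\otimes\cdots\otimes x_n$ is a bijection from ${\cal D}^n$ (in vector form) onto $\D_{2^n}$, and the column-by-column definition of $M_f$ with uniqueness by evaluation on the basis --- are precisely what is needed, and both are carried out correctly.
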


%
%
%

\end{document}